\documentclass[12pt]{amsart} 
\usepackage{amsmath,amssymb,amsthm, amsfonts, amssymb, color, enumerate}
\usepackage[dvipsnames]{xcolor}
\usepackage{fullpage}
\usepackage{mathtools}
\usepackage{amsfonts}
\usepackage{tikz-cd}
\usepackage{graphicx}
\usepackage{kantlipsum}
\usepackage{mathtools}
\usepackage{bbm}
\usepackage{xcolor}
\definecolor{vry}{RGB}{253, 231, 37}

\definecolor{vrg}{RGB}{94,201,98}

\definecolor{vrdg}{RGB}{33, 145, 140}

\definecolor{vrb}{RGB}{59,82,139}

\definecolor{vrp}{RGB}{68,1,84}

\definecolor{vro}{RGB}{249,142,9}

\definecolor{vrr}{RGB}{188,55,84}

\definecolor{vrnb}{RGB}{13,8,135}

\usepackage[colorlinks=true, linkcolor=vrr, citecolor=vrnb, filecolor=vrr, urlcolor=vrr]{hyperref}

\theoremstyle{plain}%
\newtheorem{theorem}{Theorem}[section]
\newtheorem{lemma}[theorem]{Lemma}

\newtheorem{claim}[theorem]{Claim}%

\newtheorem{proposition}[theorem]{Proposition}

\theoremstyle{plain}%
\newtheorem{remark}[theorem]{Remark}%

\newtheorem{assumption}[theorem]{Assumption}%

\usepackage{enumerate}  
\DeclareMathOperator{\per}{per}

\newcommand{\R}{\mathbb R}
\newcommand{\one}{\mathbf 1}
\newcommand{\detF}{\det\nolimits_F}
\newcommand{\ip}[2]{\left\langle #1,#2\right\rangle}
\newcommand{\norm}[1]{\left\lVert #1\right\rVert}
\newcommand{\HS}{\mathrm{HS}}
\newcommand{\tr}{\operatorname{tr}}
\newcommand{\Sym}[1]{\mathfrak{S}_{#1}}

\title[Partition function of Mallows model]{On the partition function of a class of Mallows model}
\author[R.~Tripathi]{Raghavendra~Tripathi}
\address{Raghavendra Tripathi, Division of Science, New York University, Abu Dhabi, UAE}
\email{rt1986@nyu.edu}
\date{}
\thanks{I would like to thank Soumik Pal for suggesting this problem, for carefully reading the first draft, and for making several useful suggestions. I would also like to thank Manjunath Krishnapur for his encouragement and discussions on this problem.}
\subjclass[2020]{Primary 60B15, 60C99} 
\keywords{Mallows model, partition function, random permutation, Schrodinger bridge, Fredholm determinant}
\begin{document}
\maketitle


\begin{abstract}
Let $\Sym{n}$ denote the set of all permutations on $n$ labels. Let $c:[0, 1]^2\to [0, \infty)$ be a twice continuously differentiable function. A subfamily of the Mallows model is the Gibbs probability measures on $\Sym{n}$ such that $\mathbb{P}(X=\sigma)=L_n^{-1} \prod_{i=1}^{n}\exp(-c(i/n, \sigma(i)/n))$. Mukherjee [Ann. Stat., Vol. 44(2), pp 853--875 (2016)] computed the limit of the log partition function and showed that $\lim_{n\to \infty}\frac{1}{n}\log L_n=-\Gamma_0$ where $\Gamma_0$ is the optimal cost associated with an entropy regularized optimal transport problem. In the KRP Memorial Volume of the Indian Journal of Pure and Applied Math, Pal conjectured an exact value for the limit $\lim_{n\to \infty} e^{-n\Gamma_0}L_n$ in terms of the Fredholm determinant of an integral operator and provided a partial proof. We give a complete proof of Pal's conjecture.
\end{abstract}



\section{Introduction}
\label{sec:intro}
Let $n\in \mathbb{N}$ and let $\Sym{n}$ denote the set of all permutations on $n$ labels. Mallows model is a popular measure on $\Sym{n}$ introduced by~\cite{mallows1957non} as a model for the highly ordered random permutation. Mallows model arises as the stationary distribution of biased shufflings~\cite{diaconis2000analysis,benjamini2005mixing}. Mallows model has been extensively studied and found several applications. We refer the reader to the PhD thesis~\cite{levy2017novel} for a nice survey of works related to the Mallows model and some recent applications of the Mallows model in probability and combinatorics. In general, the Mallows model is a Gibbs probability measure where the Hamiltonian is given by a right-invariant divergence, for instance, Spearman's foot rule, or Spearman's rank correlation~\cite{Diaconis88Group}. We consider the following exponential family of measures, which is a subclass of the Mallows model.

\begin{assumption}
\label{assmp:CostFunction}
   Let $c:[0, 1]^2\to [0, \infty)$ be a function satisfying the following conditions: 
    \begin{enumerate}
    \item $c(x, x)=0$
    \item $c(x, y)=c(y, x)$
    \item $c(1-x, 1-y)=c(x, y)$
    \item $c$ is twice continuously differentiable.
\end{enumerate}
\end{assumption}

Consider the Gibbs probability measure on $\Sym{n}$ given by 
\[
\mathbb{P}(X=\sigma) = L_n^{-1} \prod_{i=1}^{n} \exp\left(-c(i/n, \sigma(i)/n)\right) \;,\]
where $L_n$ is the partition function given by 
\begin{equation}
    \label{eq:PartitionDefin}
    L_n = \frac{1}{n!}\sum_{\sigma\in \Sym{n}} \prod_{i=1}^{n} \exp\left(-c(i/n, \sigma(i)/n)\right)\;.
\end{equation}

Motivated by the estimation problems for Mallows model, Mukherjee~\cite{mukherjee2016estimation} studied the asymptotics of the partition function $L_n$ of the above measure and showed that 
\[\lim_{n\to \infty}\frac{1}{n}\log(L_n)=-\Gamma_0\;,\]
for some constant $\Gamma_0$ which was computed explicitly by Mukherjee. We also refer the reader to~\cite{ali25mallows} for a more recent study of the estimation problem in the context of the Mallows model. 

Pal~\cite{pal2024limiting} observed that $\Gamma_0$ is the value of an entropy regularized optimal transport with uniform marginals. For two probability measures $\pi_0$ and $\pi_1$, let $\Pi(\pi_0, \pi_1)$ denote the set of all couplings of $\pi_0$ and $\pi_1$. And, let $\lambda$ denote the Lebesgue measure on $[0, 1]$. Then, the quantity $\Gamma_0$ can be defined as 
\[ \Gamma_0 := \inf_{\xi\in \Pi(\lambda, \lambda)}\left[\int c(x, y)\xi(x, y)\lambda(dx)\lambda(dy)+ \operatorname{Ent}(\xi)\;,\right]\]
where $\operatorname{Ent}(\cdot)$ is the so-called entropy function defined as 
\[\operatorname{Ent}(\xi) = \int \xi(x, y)\log(\xi(x, y))\lambda(dx)\lambda(dy),\] if $\xi$ is absolutely continuous with rspect to the product measure $\lambda(dx)\lambda(dy)$ and $+\infty$ otherwise. By a standard abuse of notation, we use the same symbol $\xi$ for a measure as well as its density with respect to the Lebesgue measure on $[0, 1]^2$. We refer the reader to~\cite{Leo12Schro} for the connection between the entropy regularized optimal transport and the Schr\"odinger bridge, and to~\cite{villani2021topics} for a general introduction to the optimal transport. It is known~\cite{RT93Note, Csi75} that the minimizer for the above problem is a measure $\rho\in \Pi(\lambda, \lambda)$ that has a density (with respect to the Lebesgue measure on $[0, 1]^2$) given by 
\begin{equation}
\label{eqn:density}
    \rho(x, y) = \exp\left(-c(x, y)-a(x)-a(y)\right)\;,
\end{equation}
for some measurable function $a$ that satisfies the marginal constraints given by 
\begin{equation}
\label{eqn:Marginal}
    \exp(-a(x)) = \int \exp(-c(x, y)-a(y))\lambda(dy)\;,\qquad x\in [0, 1]\;.
\end{equation}
It follows that 
\begin{align*}
    \Gamma_0 &= \int c(x, y)\rho(x, y)\lambda(dx)\lambda(dy)+ \operatorname{Ent}(\rho)\\
    &= \int c(x, y)\rho(x, y)\lambda(dx)\lambda(dy)-\int (c(x, y)+a(x)+a(y))\rho(x, y)\lambda(dx)\lambda(dy)\\
    &= -2\int a(x)\rho(x, y)\lambda(dx)\lambda(dy) = -2\int a(x)\lambda(dx)\;.
\end{align*}
If $c$ is twice continuously differentiable, it can be shown using~\eqref{eqn:density} and~\eqref{eqn:Marginal} that $a$ (and therefore $\rho$) must be twice continuously differentiable.  And, therefore, 
\[ \frac{1}{n}\sum_{i=1}^{n}a(i/n) -\int_{0}^{1}a(x)dx =\frac{2}{n^2}\sum_{i=1}^{n}a'(i/n)+O(n^{-2})\;.\]
This implies that \[ \sum_{i=1}^{n}a(i/n) =n\Gamma_0 +O(1/n)\;.\]
Using this, we conclude that 
\begin{align*}
    D_n &=\frac{1}{n!} \sum_{\sigma\in \Sym{n}}\prod_{i=1}^{n}\rho(i/n, \sigma(i)/n)\\
    &= L_n \exp(-2\sum_{i=1}^{n}a(i/n))\\
    &= L_n\exp(n\Gamma_0)e^{O(1/n)} = (1+o_n(1)) L_n\exp(n\Gamma_0)\;.
\end{align*}
This means that 
\begin{equation}
    \lim_{n\to \infty}D_n= \lim_{n\to \infty}L_n\exp(n\Gamma_0),
\end{equation}
provided the limits above exist. Motivated by this, Pal~\cite{pal2024limiting} conjectured that this limit exists and gave an explicit description of the limit of $D_n$ in terms of the Fredholm determinant of an integral operator. To describe the limit of $D_n$, we first need some setup.

\subsection{Setup and the main result}
Let $\rho\colon [0,1]^2\to (0,\infty)$ be the density of optimal coupling in the entropy regularized optimal transport problem as in~\ref{eqn:density}. Recall that $\rho$ is twice continuously differentiable. We think of $\rho$ as a kernel, and since the marginals of $\rho$ are uniform, $\rho$ is a doubly stochastic kernel. That is, 
\begin{equation}\label{eq:marginals}
\int_0^1 \rho(x,y)\,\lambda(dy) = 1
\qquad\text{and}\qquad
\int_0^1 \rho(x,y)\,\lambda(dx) = 1
\end{equation}
for every $x,y\in[0,1]$.
Let $T$ be the self-adjoint Hilbert-Schmidt operator on $L^2([0,1])$ associated with the kernel $\rho$, that is,
\[
(Tf)(x) := \int_0^1 \rho(x,y)f(y)\,\lambda(dy), \qquad f\in L^2[0, 1]\;.
\]
Let
\[
H := \Bigl\{f\in L^2([0,1]) : \int_0^1 f(x)\,dx = 0\Bigr\}.
\]
\begin{assumption}[Spectral gap]\label{ass:gap}
Assume that
\begin{equation}\label{eq:gap}
\lambda_* := \norm{T\vert_H}_{L^2\to L^2} < 1.
\end{equation}
\end{assumption}
Let $\one\in L^2[0, 1]$ denote the constant function takinng values $1$ almost everywhere. Since $T\one=\one$ by \eqref{eq:marginals}, the subspace $H$ is $T$-invariant. Since $T\vert_H$ is Hilbert-Schmidt, $(T\vert_H)^2$ is trace class, and the Fredholm determinant of $(I-(T\vert_H)^2)$ is well-defined, and it is given by the convergent infinite product 
\[
\detF(I-(T\vert_H)^2) = \prod_{n=1}^{\infty}(1-\lambda_n^2)\;,
\]
where $(\lambda_i)_i$s are eigenvalues of $T\vert_H$ (see~\cite[Chapter 14]{Neer22}). We now state the main result, which was conjectured by Pal~\cite{pal2024limiting}. 
Let $R_n$ be $n\times n$ matrix $R_n:=\left(\frac{1}{n}\rho(i/n, j/n)\right)_{1\le i,j\le n}$.
Then,
\begin{equation}
\label{eqn:DefinitionDn}
    D_n = \frac{1}{n!} \sum_{\sigma\in \Sym{n}}\prod_{i=1}^{n}\rho(i/n, \sigma(i)/n) = \frac{1}{n!}\,\per\!\left(nR_n\right),
\end{equation}
where $\per(A)$ denotes the permanent of the matrix $A$.

\begin{theorem}[Pal's Conjecture]
\label{thm:main}
Let $\rho:[0, 1]^2\to [0, \infty)$ be a twice continuously differentiable function satisfying Assumption~\ref{ass:gap}. Let $D_n$ be as defined in~\eqref{eqn:DefinitionDn}. Then, 
\[
\lim_{n\to\infty} D_n = \detF\bigl(I-(T\vert_H)^2\bigr)^{-1/2}.
\]
\end{theorem}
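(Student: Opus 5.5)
We propose to obtain the limit from a Gaussian integral representation of the permanent. The starting point is the identity
\[
\per(A) = \mathbb{E}\Bigl[\prod_{i=1}^n \Bigl(\sum_j A_{ij} Z_j\Bigr)\,\prod_{j=1}^n \overline{Z_j}\Bigr],
\]
with $Z_j$ i.i.d.\ standard complex Gaussians. It follows because $\mathbb{E}[Z^\alpha\bar Z^\beta]$ vanishes unless $\alpha=\beta$, and $\mathbb{E}|Z_j|^2=1$. Applied to $A=nR_n$, and after rewriting $\prod_j\overline{Z_j}$ and $\prod_i(\cdot)$ through their polar forms, this gives
\[
D_n=\frac{1}{n!}\,\mathbb{E}\Bigl[\prod_i W_i\prod_j \overline{Z_j}\Bigr], \qquad W_i=\sum_j \rho(i/n,j/n)Z_j .
\]
Because $\rho$ is doubly stochastic, the vector $(Z,W)$ is concentrated near the ``diagonal'' direction $\one$. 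The natural route is a saddle-point or large-deviation analysis of $\prod_i W_i\,\overline{Z_i}$ around the constant mode. This mirrors the classical proof that $\per(J_n)/n! = 1$ and gives the fluctuation correction.

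A cleaner alternative, which we would try first, is to use the row/column scaling identity. Write $\per(nR_n)/n!$ as
\[
\mathbb{E}\Bigl[\prod_{i}\Bigl(\tfrac1n\sum_j \rho(i/n,j/n)\, n\,\mathbf 1\{\sigma(i)=j\}\Bigr)\Bigr]
\]
over a uniform random permutation $\sigma$. Take logarithms via $\rho=1+(\rho-1)$, i.e.\ $D_n=\mathbb{E}\prod_i\bigl(1+h(i/n,\sigma(i)/n)\bigr)$ with $h=\rho-1$, which has zero row and column integrals. Then expand
\[
\prod_i(1+h_i)=\sum_{S}\prod_{i\in S}h(i/n,\sigma(i)/n)
\]
and compute $\mathbb{E}$ over $\sigma$. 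For a fixed set $S$ of size $k\ll n$, the joint law of $(\sigma(i))_{i\in S}$ is nearly i.i.d.\ uniform, and the mean-zero property of $h$ kills the leading term. The surviving contributions come from the corrections due to sampling without replacement, i.e.\ the inclusion–exclusion over coincidences.

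Organize these contributions combinatorially, in the spirit of a cumulant or cycle expansion. Each term corresponds to a collection of disjoint cycles $i_1\to j_1\sim i_2\to j_2\cdots$. A cycle of length $m$ contributes, in the limit, $\frac{1}{2m}\tr\bigl((T|_H)^{2m}\bigr)$, or equivalently $\frac{1}{2m}\tr(T_h^{2m})$ with $T_h$ the operator with kernel $h$, which equals $T$ on $H$ and $0$ on $\one$. The factor $\frac{1}{m}$ comes from cyclic symmetry and the factor $\frac12$ from orientation and the symmetry of $\rho$. Exponentiating the sum over cycle collections gives
\[
\exp\Bigl(\sum_m \tfrac{1}{2m}\tr(T|_H)^{2m}\Bigr)=\detF\bigl(I-(T|_H)^2\bigr)^{-1/2}.
\]
The $C^2$ regularity is used to replace the Riemann sums $\frac{1}{n^{m}}\sum h(\cdot)\cdots$ by the corresponding traces with $o(1)$ error.

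The main obstacle is uniform control of the expansion: we must dominate the tail over large $|S|$ and long cycles, uniformly in $n$, so that term-by-term convergence can be exchanged with the limit. Here the spectral gap $\lambda_*<1$ is essential. Our plan is to bound the contribution of all cycle structures with total length $\ge K$ by $C\sum_{m\ge K}\lambda_*'^{\,2m}$, for some $\lambda_*'<1$ obtained from the discrete matrices $R_n$ restricted to $\one^\perp$. Since $\|R_n|_{\one^\perp}\|\to\lambda_*$ by Hilbert–Schmidt convergence of the discretized kernels, such a $\lambda_*'$ exists for large $n$. Combined with a Hadamard-type bound on the exact finite-$n$ identity, this should yield dominated convergence. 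If the combinatorial bookkeeping of sampling without replacement proves unwieldy, we would fall back on the Gaussian representation above and perform a Laplace-method expansion in the coordinates orthogonal to $\one$. There the quadratic form is $I-(R_n|_{\one^\perp})^2$, and the Gaussian integral directly produces $\det(I-(R_n|_{\one^\perp})^2)^{-1/2}\to\detF(I-(T|_H)^2)^{-1/2}$.
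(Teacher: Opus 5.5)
Your cycle expansion identifies the correct constant, but only formally, and the step it leaves open is the whole analytic content of the theorem.

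\textbf{The interchange of limits is not justified.} With $H=(h_{ij})$, $h_{ij}=\rho(x_{i,n},x_{j,n})-1$, one has $D_n=\sum_{k=0}^n\frac{(n-k)!}{n!}\sum_{|S|=|S'|=k}\per(H_{S,S'})$. After M\"obius inversion over coincidences, the order-one terms are those where row indices and column indices pair up. These form alternating cycles and exponentiate to $\exp\bigl(\sum_m\frac{1}{2m}\tr (T|_H)^{2m}\bigr)$, as you say. What remains is to interchange $n\to\infty$ with the sum over $k$, and your plan does not achieve this.
\begin{itemize}
\item The sum is alternating with massive cancellation. For fixed $k$, the deviations from the pairing terms are $o(1)$: blocks of size at least three, Riemann-sum errors, and the factor $n^k(n-k)!/n!-1$.
\item But there are super-exponentially many such terms in $k$. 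Moreover, $n^k(n-k)!/n!\ge e^{k(k-1)/(2n)}$, so for $k\gtrsim\sqrt n$ the normalisation is nowhere near $n^{-k}$.
\item A bound $C\sum_{m\ge K}\mu^{2m}$ with $\mu<1$ controls only the leading cycle terms, not these corrections.
\item Bounding each $\per(H_{S,S'})$ in absolute value, Hadamard-type or otherwise, discards the cancellation from the zero row and column sums. It yields at best $\frac{1}{n!}\per(\one\one^{\top}+|H|)$ (entrywise absolute values). By Jensen this is at least $\exp\bigl(\frac1n\sum_{i,j}\log(1+|h_{ij}|)\bigr)$, which is exponentially large in $n$.
\end{itemize}
Uniform control of this kind is exactly a moderate-deviation asymptotic for permanents of doubly stochastic matrices. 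The paper does not prove one; it imports McCullagh's theorem (Proposition~\ref{prop:mcc}). Its own work is checking the hypotheses (Lemma~\ref{lem:A_nsatisfiesPropmcc}) and taking the Fredholm-determinant limit (Proposition~\ref{prop:FinalNail}). That last step is the part you sketch correctly.

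\textbf{The Gaussian fallback amounts to reproving McCullagh's theorem, and as stated it is off.} $Z$ is isotropic, not concentrated near $\one$. The integrand $\prod_iW_i\overline{Z_i}$ is complex and oscillatory. A Laplace method therefore needs a representation, such as a Cauchy integral over a torus, on which the modulus of the integrand peaks on the circle $\{e^{i\phi}\one\}$. It also needs tail bounds away from that circle that are uniform in $n$.

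\textbf{$R_n$ is not doubly stochastic, and neither route handles this.} The claim that the mean-zero property of $h$ kills the leading term holds only for the continuum integrals.
\begin{itemize}
\item The discrete row defects are $(q_n)_i=(R_n\one_n-\one_n)_i=O(n^{-1})$.
\item Even the first-order term $\sum_i(q_n)_i=n\bar q_n$ is small only because of the extra cancellation $\bar q_n=O(n^{-2})$ in Lemma~\ref{lem:q}. That cancellation comes from $\int_0^1(\rho(x,1)-\rho(x,0))\,dx=0$.
\item These $q_n$-corrections then enter every order of your expansion.
\item In the saddle-point picture, the same defect is a saddle displaced from $\one$. This produces a linear term and a prefactor $\prod_i(1+(q_n)_i)$, both of which you would have to show are harmless.
\end{itemize}
The paper settles this with Lemma~\ref{lem:DSPerturbation}. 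It uses a symmetric diagonal scaling $u^{(n)}=\one_n+h_n$, found by a contraction argument that relies on the uniform invertibility of $I+R_n$ (a consequence of the spectral gap). This makes $A_n$ exactly doubly stochastic while $\sum_i\log u^{(n)}_i=O(n^{-1})$, so $\widehat D_n=D_n(1+O(n^{-1}))$.

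To complete your approach, you need this scaling step, followed either by an appeal to Proposition~\ref{prop:mcc} or by a genuinely uniform saddle-point analysis of the scaled matrix.
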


\begin{remark}
Our proof does not actually use the conditions $c(x,x)=0$ or $c(1-x,1-y)=c(x,y)$ on the cost function $c$. 
\end{remark}

\section{Proof of Theorem~\ref{thm:main}}
In this section, we prove Theorem~\ref{thm:main} assuming some Lemmas that we prove in Section~\ref{sec:remaining}. For notational convenience, we will write $x_{i, n}=i/n$. We first observe that even though the kernel $\rho$ is doubly stochastic, the matrix $R_n:=(\frac{1}{n}\rho(x_{i, n}, x_{j, n}))_{1\leq i, j\leq n}$ is not doubly stochastic. However, we first show that $R_n$ is almost doubly stochastic.
\begin{lemma}[Doubly stochastic perturbation]
\label{lem:DSPerturbation}
Fix $n\in \mathbb{N}$. There exists a vector $h_n\in \mathbb{R}^n$ ssatisfying:
\[
\norm{h_n}_{2, n} = O(n^{-1}),\quad \norm{h_n}_{\infty} = O(n^{-1/2}), \quad \text{and} \quad  \sum_{i=1}^{n}\log (1+h_n(i))=O(n^{-1})\;.
\]
Let $u^{(n)}:=\one_n+h_n$, where $\one_n\in \mathbb{R}^n$ be the vector all whose coordinates are $1$. Define
\[
\widehat\rho^{(n)}_{ij} := u_i^{(n)}\,\rho(x_{i,n},x_{j,n})\,u_j^{(n)}, \qquad 1\leq i, j\leq n\;.
\]
Then, the matrix $A_n:=\left(\frac{1}{n}\widehat\rho^{(n)}\right)$ is symmetric and doubly stochsstic. 
\end{lemma}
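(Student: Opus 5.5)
We will find $h_n$ by solving the symmetric matrix scaling (Sinkhorn) equation
\[
u_i \sum_{j=1}^n \tfrac1n \rho(x_{i,n},x_{j,n})\,u_j = 1,\qquad 1\le i\le n,
\]
with a perturbative Newton/contraction argument around $u=\one_n$. Write $\epsilon_n(i):=\frac1n\sum_j \rho(x_{i,n},x_{j,n})-1$. Since $\int_0^1\rho(x,y)\,dy=1$ and $\rho$ is $C^2$, the Riemann-sum (right-endpoint) error gives $\epsilon_n(i)=-\frac{1}{2n}\bigl(\rho(x_{i,n},1)-\rho(x_{i,n},0)\bigr)+O(n^{-2})$. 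Hence $\norm{\epsilon_n}_\infty=O(n^{-1})$ and $\norm{\epsilon_n}_{2,n}=O(n^{-1})$. Here I read $\norm{v}_{2,n}^2=\frac1n\sum_i v_i^2$. Setting $u=\one+h$, the equation becomes
\[
h+R_nh=-\epsilon_n-\operatorname{diag}(h)R_n h-\operatorname{diag}(h)\epsilon_n.
\]
So the linear operator to invert is $I+R_n$, and the remaining terms are quadratic or small.

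The key step is uniform invertibility of $I+R_n$ on $(\R^n,\norm{\cdot}_{2,n})$ and on $\ell^\infty$.
\begin{itemize}
\item \emph{The $\norm{\cdot}_{2,n}$ norm.} $R_n$ is symmetric, so it suffices to show its eigenvalues stay away from $-1$. By standard Nyström/collocation convergence, $R_n$ approximates $T$ in operator norm, after embedding $\R^n$ into step functions, with error $O(n^{-1})$ because $\rho$ is Lipschitz. The spectrum of $T$ is $\{1\}\cup\operatorname{spec}(T|_H)\subset\{1\}\cup[-\lambda_*,\lambda_*]$ with $\lambda_*<1$ by Assumption~\ref{ass:gap}. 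Hence for large $n$ the spectrum of $R_n$ lies in $[-\lambda_*-o(1),\,1+o(1)]$, and $\norm{(I+R_n)^{-1}}_{2,n\to2,n}\le C$.
\item \emph{The sup norm.} Use $(I+R_n)^{-1}=I-R_n(I+R_n)^{-1}$ together with $\norm{R_n v}_\infty\le \norm{\rho}_\infty\norm{v}_{1,n}\le\norm{\rho}_\infty\norm{v}_{2,n}$. This gives $\norm{(I+R_n)^{-1}v}_\infty\le C\norm{v}_\infty$.
\end{itemize}

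With these bounds, a Banach fixed point argument on the ball $\{\norm{h}_\infty\le Kn^{-1}\}$ applies to
\[
h=-(I+R_n)^{-1}\bigl[\epsilon_n+\operatorname{diag}(h)(R_nh+\epsilon_n)\bigr].
\]
The quadratic term has sup norm $O(n^{-2})$ and Lipschitz constant $O(n^{-1})$ on this ball. This yields a unique $h_n$ with $\norm{h_n}_\infty=O(n^{-1})$, hence also $\norm{h_n}_{2,n}=O(n^{-1})$. This is stronger than the stated $O(n^{-1/2})$; if $\norm{\cdot}_{2,n}$ is instead the unnormalized $\ell^2$ norm, the bounds become $O(n^{-1/2})$ and $O(n^{-1})$ in the stated form. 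In either reading $u^{(n)}>0$ for large $n$. Symmetry of $A_n$ is immediate from the symmetry of $\rho$. Row sums equal $1$ by construction, and column sums equal $1$ by symmetry.

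For the last bound, expand $\sum_i\log(1+h_n(i))=\sum_i h_n(i)+O(\sum_i h_n(i)^2)$. The second term is $O(n\cdot n^{-2})=O(n^{-1})$. For the first, take the inner product of the fixed-point equation with $\one$. Since $R_n$ is symmetric, $\one^\top R_n h=h^\top(\one+\epsilon_n)$, so
\[
2\sum_i h_i=-\sum_i\epsilon_n(i)-\sum_i h_i\epsilon_n(i)-\sum_i h_i (R_nh)_i-\sum_i h_i\epsilon_n(i).
\]
The last three terms are $O(n\cdot n^{-2})=O(n^{-1})$. So it remains to show $\sum_i\epsilon_n(i)=O(n^{-1})$, that is,
\[
\frac1n\sum_{i,j}\rho(x_{i,n},x_{j,n})-n=O(n^{-1}).
\]
This is a two-dimensional right-endpoint Riemann sum of a $C^2$ function whose integral is $1$. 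Its first-order error is $-\frac{1}{2n}\int_0^1\bigl(\rho(x,1)-\rho(x,0)\bigr)dx-\frac{1}{2n}\int_0^1\bigl(\rho(1,y)-\rho(0,y)\bigr)dy$. Since the marginals are uniform for every $x,y$, we have $\int_0^1\rho(x,1)\,dx=\int_0^1\rho(x,0)\,dx=1$, so this term vanishes and the error is $O(n^{-2})$. Multiplying by $n$ gives $O(n^{-1})$.

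This cancellation, together with the spectral gap controlling $(I+R_n)^{-1}$, is the main obstacle. I expect the delicate part to be checking that the second-order Euler–Maclaurin terms are genuinely $O(n^{-2})$ under only $C^2$ regularity; if needed, I would use a trapezoid-type comparison for each coordinate separately.
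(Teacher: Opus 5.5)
Your argument is correct and follows the paper's route: the same fixed-point equation $(I+R_n)h=-q_n-h\circ q_n-h\circ(R_nh)$, uniform invertibility of $I+R_n$ from $\norm{S_n-T}\to 0$ plus the spectral gap, and a Banach contraction. For the logarithmic bound you likewise pair with $\one_n$, use the symmetry of $R_n$, and invoke the marginal cancellation $\bar q_n=O(n^{-2})$, which the paper obtains by exactly the coordinate-wise trapezoid comparison you propose (the first-order right-endpoint correction is $+\frac{f(1)-f(0)}{2n}$, so your minus sign is a harmless slip).

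The one real difference is how you control $h_n$ in sup norm. You upgrade the $\norm{\cdot}_{2,n}$ resolvent bound to $\ell^\infty$ via $(I+R_n)^{-1}=I-R_n(I+R_n)^{-1}$ and $\norm{R_nv}_\infty\le\norm{\rho}_\infty\norm{v}_{2,n}$, then contract on a sup-norm ball. This gives the sharper $\norm{h_n}_\infty=O(n^{-1})$. The paper instead contracts on a $\norm{\cdot}_{2,n}$-ball and gets only $\norm{h_n}_\infty\le\sqrt n\,\norm{h_n}_{2,n}=O(n^{-1/2})$.
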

The following is an immediate consequence of Lemma~\ref{lem:DSPerturbation}.
\begin{lemma}[Comparison of $D_n$ and $\widehat D_n$]
\label{lem:Dcompare}
Define 
\[
\widehat D_n := \frac1{n!}\,\per\bigl(\widehat\rho^{(n)}_{ij}\bigr)_{1\le i,j\le n}.
\]
Then, 
\[
\widehat D_n = D_n\bigl(1+O(n^{-1})\bigr).
\]
\end{lemma}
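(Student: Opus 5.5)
The plan is to pull the diagonal scaling out of the permanent and then control the resulting product using the bounds of Lemma~\ref{lem:DSPerturbation}. Write $\widehat\rho^{(n)} = U_n P_n U_n$, where $U_n=\operatorname{diag}(u^{(n)}_1,\dots,u^{(n)}_n)$ and $P_n=(\rho(x_{i,n},x_{j,n}))_{i,j}$. The permanent is multilinear in rows and in columns. Multiplying row $i$ by $u_i^{(n)}$ therefore multiplies the permanent by $u_i^{(n)}$, and the same holds for column $j$. Concretely, for each $\sigma\in\Sym{n}$,
\[
\prod_{i=1}^n \widehat\rho^{(n)}_{i\sigma(i)} = \Bigl(\prod_{i=1}^n u_i^{(n)}\Bigr)\Bigl(\prod_{i=1}^n u_{\sigma(i)}^{(n)}\Bigr)\prod_{i=1}^n \rho(x_{i,n},x_{\sigma(i),n}) = \Bigl(\prod_{i=1}^n u_i^{(n)}\Bigr)^2\prod_{i=1}^n \rho(x_{i,n},x_{\sigma(i),n}),
\]
because $\sigma$ is a bijection. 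Summing over $\sigma$ and dividing by $n!$ gives the exact identity
\[
\widehat D_n = \Bigl(\prod_{i=1}^n (1+h_n(i))\Bigr)^2 D_n .
\]

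Next we take logarithms:
\[
\log\Bigl(\prod_{i=1}^n (1+h_n(i))\Bigr)^2 = 2\sum_{i=1}^n \log(1+h_n(i)).
\]
This is well defined because $\norm{h_n}_\infty=O(n^{-1/2})$, so $1+h_n(i)>0$ for all large $n$. By Lemma~\ref{lem:DSPerturbation} the sum is $O(n^{-1})$. Hence the prefactor equals $\exp(O(n^{-1}))=1+O(n^{-1})$, which is the claim.

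The only point that needs care is where the $O(n^{-1})$ bound on $\sum_i\log(1+h_n(i))$ comes from. It is not implied by $\norm{h_n}_{2,n}$ and $\norm{h_n}_\infty$ alone: a Taylor expansion only gives $\sum_i h_n(i)+O(\sum_i h_n(i)^2)$, and the linear term must be handled separately. Here, however, that bound is part of the statement of Lemma~\ref{lem:DSPerturbation}, so we take it as given, and the argument reduces to the algebraic identity plus the exponential estimate.
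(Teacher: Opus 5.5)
Your proposal is correct and is the same argument as the paper's: you pull the symmetric diagonal scaling out of the permanent as the exact factor $\prod_{i}(u_i^{(n)})^2$, then use the bound $\sum_i \log(1+h_n(i)) = O(n^{-1})$ from Lemma~\ref{lem:DSPerturbation} to make that factor $1+O(n^{-1})$. Your remark that this logarithmic bound needs separate control of the mean $\frac{1}{n}\sum_i h_n(i)$, and does not follow from the norm bounds alone, is accurate; the paper obtains it exactly that way, by pairing the fixed-point equation with $\one_n$.
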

\begin{proof}
Multiplying row $i$ and column $i$ of a matrix by the same scalar $u_i^{(n)}$ multiplies the permanent by $\prod_{i=1}^n (u_i^{(n)})^2$. Hence
\[
\widehat D_n
= D_n\prod_{i=1}^n (u_i^{(n)})^2
= D_n\exp\!\left(2\sum_{i=1}^n \log\bigl(1+(h_n)_i\bigr)\right).
\]
The conclusion follows from the fact $\sum_{i=1}^{n}\log (1+h_n(i))=O(n^{-1})$ in Lemma~\ref{lem:DSPerturbation}.

\end{proof}
To conclude the proof, we need the following asymptotic for the permanent of a scaled doubly stochastic matrix~\cite[Section 3]{mccullagh2014asymptotic} in the so-called moderate deviation regime. 
\begin{proposition}[McCullagh asymptotic]\label{prop:mcc}
For each $n$, let $A_n$ be a doubly stochastic matrix of order $n$, and let $J_n$ denote the matrix with all entries equal to $1/n$. Assume that:
\begin{enumerate}
    \item for every $p\ge 1$,
    \[
    \sup_{n\ge 1}\,\frac{1}{n^2}\sum_{i,j=1}^n \bigl|n(A_n-J_n)_{ij}\bigr|^p < \infty;
    \]
    \item there exists $\delta>0$ such that every non-trivial eigenvalue $\lambda$ of $A_n$ satisfies $|\lambda|\le 1-\delta$.
\end{enumerate}
Then,
\begin{equation}
\label{eq:mcc}
\frac{1}{n!}\,\per(nA_n)
= \det\bigl(I+J_n-A_n^{\top}A_n\bigr)^{-1/2}\bigl(1+O(n^{-1})\bigr).
\end{equation}
\end{proposition}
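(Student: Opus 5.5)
The plan is to prove Proposition~\ref{prop:mcc} through a Gaussian integral representation of the permanent, followed by a Laplace (saddle point) analysis in which condition~(2) keeps the fluctuations nondegenerate and condition~(1) controls the error terms. Let $z=(z_1,\dots,z_n)$ be i.i.d.\ standard complex Gaussians, so that $\mathbb{E}[z_i\bar z_j]=\delta_{ij}$. By Wick's formula, for any $n\times n$ matrix $B$,
\[
\per(B)=\mathbb{E}\Bigl[\prod_{i=1}^n \bar z_i\,(Bz)_i\Bigr].
\]
Apply this with $B=nA_n$ and use polar coordinates $z_i=r_ie^{i\theta_i}$. Writing $A_n=J_n+E_n$ with $E_n\one_n=E_n^{\top}\one_n=0$, we get $(nA_nz)_i=\sum_j z_j+n(E_nz)_i$. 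The factor $1/n!$ should come out when we integrate over the overall scale $\sum_j z_j$: this produces $\Gamma(n+1)$, up to the normalization that remains from the fluctuation integral.

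\textbf{Step 1.} Decompose $z=s\,\one_n/\sqrt n+w$ with $w\perp\one_n$, i.e.\ $w\in H_n$, the discrete analogue of $H$. Then $(nA_nz)_i=\sqrt n\,s+n(E_nw)_i$ and $\bar z_i=\bar s/\sqrt n+\bar w_i$. Expand
\[
\prod_i \bar z_i(nA_nz)_i=|s|^{2n}\prod_i\bigl(1+\sqrt n\,\bar w_i/\bar s\bigr)\bigl(1+\sqrt n\,(E_nw)_i/s\bigr),
\]
take logarithms, and keep the terms up to second order in $w$. The linear terms vanish because $\sum_i w_i=\sum_i(E_nw)_i=0$. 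The quadratic part, after averaging over the phase of $s$, has the form
\[
-\tfrac{n}{2|s|^2}\Bigl(\sum_i \bar w_i^2+\sum_i (E_nw)_i^2\Bigr)+\tfrac{n}{|s|^2}\sum_i\bar w_i(E_nw)_i .
\]
At the saddle $|s|^2\approx n$, and combined with the Gaussian weight $e^{-|w|^2}$ this yields a Gaussian integral over $H_n$ whose covariance operator is $I-E_n^{\top}E_n=I-A_n^{\top}A_n$ restricted to $H_n$. The real and imaginary parts each contribute a determinant factor, and these combine into a single power $-1/2$. Since $J_n$ acts as the identity on $\one_n$ and as $0$ on $H_n$, while $A_n^{\top}A_n$ fixes $\one_n$, we have
\[
\det\bigl((I-A_n^{\top}A_n)|_{H_n}\bigr)=\det(I+J_n-A_n^{\top}A_n),
\]
which is exactly the main term in~\eqref{eq:mcc}.

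\textbf{Step 2.} The $s$-integral is then a Gamma integral, $\mathbb{E}|s|^{2n}=n!$, which cancels the $1/n!$. I will do the bookkeeping of the $\sqrt n$ scalings carefully at this point, since any leftover power of $n$ must cancel exactly.

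\textbf{Step 3 (main obstacle).} The remaining work is the error control: showing that the cubic and higher terms in the logarithm, together with the region where some $\sqrt n\,|w_i|/|s|$ is not small, contribute only a factor $1+O(n^{-1})$.
\begin{itemize}
\item For the tails, I would first use condition~(2). The gap $\|A_n|_{H_n}\|\le 1-\delta$ makes the quadratic form uniformly positive definite, which yields Gaussian concentration for $w$. With this, a typical $w_i$ is $O(1)$, so $\sqrt n\,w_i/s=O(n^{-1/2})$.
\item The cubic terms are $\sum_i O(n^{-3/2})$ times products of $w_i$ and $(E_nw)_i$. They have mean zero by phase symmetry, so they contribute at order $n^{-1}$ only through their second moment and through the quartic terms.
\item Condition~(1) bounds all entry moments of $nE_n$. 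Hence $(E_nw)_i$ has uniformly bounded moments, and $\tr\bigl((E_n^{\top}E_n)^k\bigr)=O(1)$.
\end{itemize}
The delicate point is the expectation of the exponential of the non-Gaussian remainder. I would handle it by cumulant expansion on the event $\max_i|w_i|\le n^{\epsilon}$, and bound the complement using Hölder's inequality together with the moment bounds. Throughout, I would check that every cumulant beyond the second is $O(n^{-1})$, uniformly in $n$, under the two hypotheses.
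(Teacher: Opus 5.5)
The paper does not prove Proposition~\ref{prop:mcc}; it quotes it from McCullagh (Section~3 of his paper). Judged on its own, your argument has a genuine gap at its core: the claim in Step~3 that $\sqrt n\,w_i/s=O(n^{-1/2})$ is false. At the saddle you identify yourself, $|s|\approx\sqrt n$. Writing $s=|s|e^{i\phi}$, the quantity $a_i:=\sqrt n\,\bar w_i/\bar s\approx e^{i\phi}\bar w_i$ is therefore an $O(1)$ complex Gaussian. Only $b_i:=\sqrt n\,(E_nw)_i/s$ is small: it is $O(n^{-1/2})$ in mean square on average, since $\sum_{i,j}(E_n)_{ij}^2=O(1)$ by (1). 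This has three consequences.
\begin{enumerate}
\item $|a_i|\ge 1$ with probability bounded below, so $\log(1+a_i)$ has no convergent expansion.
\item The cubic term $\sum_i a_i^3$ has variance about $6n$. It is of size $\sqrt n$, not a sum of $O(n^{-3/2})$ terms.
\item By Jensen's formula, $\mathbb{E}\log|1+a_i|\approx\frac12\int_1^\infty e^{-u}\log u\,du>0$, so $\bigl|\prod_i(1+a_i)\bigr|$ is typically exponentially large in $n$.
\end{enumerate}
The $O(1)$ size of $\frac1{n!}\per(nA_n)$ therefore comes from cancellation of phases. A cumulant expansion on $\{\max_i|w_i|\le n^{\epsilon}\}$ plus H\"older on the complement cannot see this. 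For the same reason, (2) cannot act by making a quadratic form positive definite and `concentrating' $w$: the factor multiplying the Gaussian law of $w$ is complex, not a positive tilt.

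Step~1 is also not a derivation. $\log(1+a_i)+\log(1+b_i)$ has no cross term $\bar w_i(E_nw)_i$, because the $a_ib_i$ in the product cancels. Its quadratic part is $-\frac{n}{2\bar s^{2}}\sum_i\bar w_i^2-\frac{n}{2s^{2}}\sum_i(E_nw)_i^2$. This is an antiholomorphic plus a holomorphic form, not a Hermitian covariance. Indeed, your ``real and imaginary parts each contribute a determinant'' would give power $-1$, not $-1/2$. The determinant actually comes from the identity $\mathbb{E}\bigl[\exp\bigl(\tfrac12\bar w^{\top}\mathcal{A}\bar w+\tfrac12 w^{\top}\mathcal{B}w\bigr)\bigr]=\det(I-\mathcal{A}\mathcal{B})^{-1/2}$ for standard complex Gaussian $w$. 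Here it gives $\det\bigl(I-\tfrac{n^2}{|s|^4}E_n^{\top}E_n\bigr)^{-1/2}$, with no phase averaging needed.

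What your proof lacks is a reason why the non-Gaussian part of $\prod_i(1+a_i)$ is negligible even though it is not small. One workable route is the following.
\begin{itemize}
\item Since the $w$-expectation pairs only equal degrees, you have exactly $\frac1{n!}\per(nA_n)=\sum_{k}\frac{n^k(n-k)!}{n!}\,\mathbb{E}\bigl[e_k(\bar w)\,e_k(E_nw)\bigr]$, where $e_k$ are the elementary symmetric polynomials.
\item Write $\sum_k t^ke_k(\bar w)=\exp\bigl(\sum_{r\ge2}\frac{(-1)^{r-1}}{r}t^rp_r(\bar w)\bigr)$ as formal series in $t$, with $p_r$ the power sums and $p_1(\bar w)=0$.
\item The $p_r(\bar w)$ are of size $\sqrt n$, but they couple to the holomorphic side only through Wick pairings such as $\mathbb{E}[p_r(\bar w)\,p_r(E_nw)]=r!\sum_{i,j}(E_n)_{ji}^r=O(n^{2-r})$. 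This is where (1) genuinely enters.
\end{itemize}

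Finally, you use (2) in the form $\norm{A_n\vert_{H_n}}\le 1-\delta$. An eigenvalue bound does not give this for non-normal $A_n$, and the statement as printed actually fails without it. Take $4\mid n$ and $A_n=J_n+\frac1n\epsilon\eta^{\top}$ with $\epsilon,\eta\in\{\pm1\}^n$ balanced and orthogonal. Then (1) holds and all non-trivial eigenvalues are $0$. Yet $\det(I+J_n-A_n^{\top}A_n)=0$, while $\per(nA_n)/n!=2^n/\binom{n}{n/2}$ is finite. The singular-value version of (2) is what is needed; it is automatic in the paper's symmetric application.
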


\noindent We will apply Proposition~\ref{prop:mcc} to the doubly stochastic matrix matrix $A_n:=\left(\frac{1}{n}\widehat\rho^{(n)}\right)$. To this end, we need to check that $A_n$ indeed satisfies the assumptions of Proposition~\ref{prop:mcc}. We record this as a Lemma below, the proof is deferred to the Section~\ref{sec:remaining}.
\begin{lemma}\label{lem:A_nsatisfiesPropmcc}
   The matrix $A_n:=\left(\frac{1}{n}\widehat\rho^{(n)}\right)$ satisfies the assumptions of Proposition~\ref{prop:mcc}.
\end{lemma}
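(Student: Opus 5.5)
We must verify (1) the entrywise moment bound and (2) the uniform spectral gap for $A_n=\frac1n\widehat\rho^{(n)}$, using Lemma~\ref{lem:DSPerturbation} and Assumption~\ref{ass:gap}.

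\emph{Condition (1).} We have $n(A_n-J_n)_{ij}=u_i^{(n)}\rho(x_{i,n},x_{j,n})u_j^{(n)}-1$. Since $\rho$ is continuous on the compact square, $\norm{\rho}_\infty<\infty$. Lemma~\ref{lem:DSPerturbation} gives $\norm{h_n}_\infty=O(n^{-1/2})$, so $\sup_{i,n}|u_i^{(n)}|\le 1+C$. Hence every entry of $n(A_n-J_n)$ is bounded by $(1+C)^2\norm{\rho}_\infty+1$, uniformly in $i,j,n$. The normalized $p$-th moment is then bounded by the $p$-th power of this constant, which is finite for every $p$. This step is routine.

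\emph{Condition (2).} $A_n$ is symmetric, so its eigenvalues are real, and $A_n\one_n=\one_n$. The nontrivial eigenvalues are those of $A_n$ restricted to $\one_n^\perp$. I would compare $A_n$ with the operator $T$ in three steps.

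\textbf{Step one.} Write $A_n=R_n+E_n$, where $E_n=\frac1n\bigl(h_n(i)\rho_{ij}+\rho_{ij}h_n(j)+h_n(i)\rho_{ij}h_n(j)\bigr)$. I will interpret $\norm{\cdot}_{2,n}$ as the normalized norm $(\frac1n\sum v_i^2)^{1/2}$. With this convention, $\norm{h_n}_{2,n}=O(n^{-1})$ means the Euclidean norm of $h_n$ is $O(n^{-1/2})$. The matrix $\frac1n(\rho_{ij}h_n(j))$ has Hilbert--Schmidt norm at most $\norm{\rho}_\infty n^{-1}\cdot\sqrt n\,|h_n|_2=O(n^{-1})$. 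The quadratic term is even smaller. Hence $\norm{E_n}_{op}\to0$.

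\textbf{Step two.} Embed $\mathbb{R}^n$ isometrically into $L^2[0,1]$. Map a vector $v$ to the step function $P_nv=\sum_i \sqrt n\, v_i\,\mathbf 1_{((i-1)/n,i/n]}$. Under this map, $R_n$ corresponds to the integral operator with the piecewise constant kernel $\rho_n(x,y)=\rho(\lceil nx\rceil/n,\lceil ny\rceil/n)$. This map sends $\one_n$ to a constant and sends $\one_n^\perp$ into $H$. By uniform continuity of $\rho$, $\norm{\rho_n-\rho}_{L^2([0,1]^2)}\to0$, so the compressed operator is within $o(1)$ of $T$ in Hilbert--Schmidt norm. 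Consequently, for $v\perp\one_n$ with $|v|_2=1$, we get $|\langle R_nv,v\rangle|\le \lambda_*+o(1)$.

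\textbf{Step three.} Combining the two steps, every eigenvalue of $A_n$ on $\one_n^\perp$ satisfies $|\lambda|\le\lambda_*+o(1)$. Here we use that $\one_n^\perp$ is invariant under the symmetric matrix $A_n$, so the relevant norm is the supremum of $|\langle A_nv,v\rangle|$ over unit $v\perp\one_n$. For $n\ge n_0$ this gives $|\lambda|\le (1+\lambda_*)/2$.

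\emph{Small $n$.} For the finitely many $n<n_0$, we need a separate argument. The entries of $A_n$ are strictly positive, because $\rho>0$ and $u^{(n)}>0$ once $\norm{h_n}_\infty<1$. By Perron--Frobenius, the eigenvalue $1$ is then simple and all other eigenvalues satisfy $|\lambda|<1$. Taking $\delta$ as the minimum over these finitely many gaps together with $(1-\lambda_*)/2$ gives a uniform gap. If $u^{(n)}$ fails to be positive for some small $n$, I would either note that Proposition~\ref{prop:mcc} is asymptotic and discard finitely many $n$, or check positivity directly from the construction.

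\emph{Main obstacle.} The delicate step is the operator-norm control in Step one. It depends on reading $\norm{h_n}_{2,n}$ correctly; if the norm is unnormalized the estimate is only easier. It also requires that the rank-one direction $\one_n$ and $H$ match exactly under the embedding $P_n$, which they do since the step-function map preserves constants and orthogonality.
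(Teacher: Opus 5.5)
Your proof is correct and takes essentially the paper's route: condition (1) follows from uniformly bounded entries, and condition (2) comes from transferring $A_n$ to a step-function kernel on $L^2[0,1]$ and comparing it with $T$ on $H$ in Hilbert--Schmidt norm. The paper merges your Steps one and two into the single estimate $\norm{K_n-K}_{\HS}\to 0$ for the centered kernels $\widehat\rho_n-1$ and $\rho-1$, and it reads off the nontrivial eigenvalues from $B_n=A_n-J_n$ rather than from the quadratic form on $\one_n^\perp$; your Perron--Frobenius treatment of small $n$ is an extra, since the paper only claims the gap for all sufficiently large $n$ (where $A_n$ is defined).
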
 
Let $B_n:=A_n-J_n$. It is easy to verify that $B_nJ_n=J_nB_n=0$. In particular, $I+J_n-A_n^2 = I-B_n^2$. Therefore, Lemma~\ref{lem:A_nsatisfiesPropmcc} and Proposition~\ref{prop:mcc} yield
\[
\widehat D_n = \det(I - B_n^2)^{-1/2}(1+O(n^{-1}))\;.
\]
The proof of Theorem~\ref{thm:main} is complete with the following Proposition. 
\begin{proposition}
\label{prop:FinalNail}
Let $B_n$ be as above. Then, 
\[
\lim_{n\to \infty}\det(I-B_n^2)  = \det_F(I-(T\vert_H)^2)\;.
\]
\end{proposition}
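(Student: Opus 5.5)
Our plan is to realize $B_n$ as a finite-rank integral operator on $L^2[0,1]$ and then use continuity of the Fredholm determinant in trace norm. Partition $[0,1]$ into the intervals $I_{i,n}=((i-1)/n,i/n]$ and let $P_n$ be the orthogonal projection onto functions that are constant on each $I_{i,n}$. The map $U_n e_i = \sqrt n\,\one_{I_{i,n}}$ is an isometry from $\mathbb{R}^n$ (with the standard inner product) onto $\operatorname{ran}P_n$. Under $U_n$, the matrix $B_n$ corresponds to the integral operator $\mathcal{B}_n$ with piecewise constant kernel
\[
k_n(x,y)=\widehat\rho^{(n)}_{ij}-1 \quad\text{for } (x,y)\in I_{i,n}\times I_{j,n}.
\]
Hence $\det(I-B_n^2)=\detF(I-\mathcal{B}_n^2)$, because $\mathcal{B}_n$ has finite rank and vanishes on $(\operatorname{ran}P_n)^\perp$.

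Next, let $S:=T-\one\otimes\one$ be the integral operator with kernel $\rho(x,y)-1$. Since $T\one=\one$ and $T$ is self-adjoint, $S$ vanishes on $\operatorname{span}\{\one\}$ and agrees with $T$ on $H$. Therefore $\detF(I-S^2)=\detF(I-(T\vert_H)^2)$. The claim then reduces to
\[
\mathcal{B}_n^2\to S^2\quad\text{in trace norm}.
\]
Since $\|X^2-Y^2\|_1\le \|X-Y\|_{\HS}(\|X\|_{\HS}+\|Y\|_{\HS})$, it suffices to show $\|\mathcal{B}_n-S\|_{\HS}\to0$, i.e.\ $\|k_n-(\rho-1)\|_{L^2([0,1]^2)}\to 0$. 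Continuity of $\detF(I-\cdot)$ in trace norm (\cite[Chapter 14]{Neer22}) then finishes the proof.

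For the kernel estimate, split
\[
k_n-(\rho-1)=\bigl(\widehat\rho^{(n)}_{ij}-\rho(x_{i,n},x_{j,n})\bigr)+\bigl(\rho(x_{i,n},x_{j,n})-\rho(x,y)\bigr)
\]
on each cell $I_{i,n}\times I_{j,n}$. The second term is $O(1/n)$ uniformly because $\rho$ is $C^1$, indeed uniformly continuous suffices. For the first term, write
\[
\widehat\rho^{(n)}_{ij}-\rho_{ij}=(h_i+h_j+h_ih_j)\rho_{ij}.
\]
Its squared $L^2$ norm is at most $\frac{C}{n^2}\sum_{i,j}(h_i^2+h_j^2+h_i^2h_j^2)$. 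This is bounded by $C\,\frac1n\sum_i h_i^2\,(1+\|h_n\|_\infty^2)$, which tends to $0$ by Lemma~\ref{lem:DSPerturbation}. Here I read $\norm{h_n}_{2,n}$ as the normalized norm $(\frac1n\sum_i h_i^2)^{1/2}=O(n^{-1})$; even the unnormalized reading gives decay.

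The main obstacle is bookkeeping rather than analysis. First, the determinant of the matrix must match the Fredholm determinant of $\mathcal{B}_n$ exactly; this is why we use the isometry $U_n$, so that $\det(I-B_n^2)$ and $\detF(I-\mathcal{B}_n^2)$ coincide and no scaling is lost. Second, we need to confirm that $S$ and $T\vert_H$ have the same nonzero spectrum, which holds by the invariance of $H$ and $\operatorname{span}\{\one\}$. No spectral gap is needed for this limit statement; Assumption~\ref{ass:gap} only guarantees that the limiting determinant is nonzero.
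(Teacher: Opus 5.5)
Your proposal is correct and is essentially the paper's proof. Your $S$ (kernel $\rho-1$) is the paper's $K$ and your $\mathcal{B}_n$ is its $K_n$, and you use the same steps: the finite-rank identification $\det(I-B_n^2)=\detF(I-K_n^2)$, the same trace-norm bound on $K_n^2-K^2$ via Hilbert--Schmidt convergence, continuity of $\detF$, and the splitting $K=0\oplus T\vert_H$. The only difference is that you derive $\norm{k_n-(\rho-1)}_{L^2([0,1]^2)}\to 0$ inline, whereas the paper isolates it as Lemma~\ref{lem:Knconv}.
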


\section{Remaining proofs}
\label{sec:remaining}
\subsection{Towards the proof of Lemma~\ref{lem:DSPerturbation}}
We begin with a simple observation. For $f\in C^2([0,1])$ define the right-endpoint Riemann sum
\[
\mathcal{R}_n(f):=\frac1n\sum_{j=1}^n f\!\left(\frac{j}{n}\right).
\]
The following Lemma quantifies the error between the Riemann sum and the definite integral. The proof is a straightforward calculus exercise; we skip the proof
\begin{lemma}\label{lem:em}
For every $f\in C^2([0,1])$,
\[
\mathcal{R}_n(f)=\int_0^1 f(t)\,dt+\frac{f(1)-f(0)}{2n}+O\!\left(\frac1{n^2}\right),
\]
where the implicit constant depends only on $\norm{f''}_{\infty}$.
\end{lemma}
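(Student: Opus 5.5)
The plan is to compare the Riemann sum with the integral one subinterval at a time, using a second-order Taylor expansion of $f$ about the right endpoint. First write
\[
\mathcal{R}_n(f)-\int_0^1 f(t)\,dt=\sum_{j=1}^n \int_{(j-1)/n}^{j/n}\bigl(f(j/n)-f(t)\bigr)\,dt .
\]
On each interval $[(j-1)/n,\,j/n]$, Taylor's theorem with Lagrange remainder gives
\[
f(t)=f(j/n)+f'(j/n)(t-j/n)+\tfrac12 f''(\xi_{j,t})(t-j/n)^2 .
\]
Integrating over the interval, the linear term contributes $-f'(j/n)\cdot\bigl(-\tfrac{1}{2n^2}\bigr)$, so each summand equals $\frac{1}{2n^2}f'(j/n)$ plus an error. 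That error is bounded in absolute value by $\norm{f''}_\infty/(6n^3)$. Summing over $j$, the remainders contribute at most $\norm{f''}_\infty/(6n^2)$. This leaves
\[
\mathcal{R}_n(f)-\int_0^1 f=\frac{1}{2n}\cdot\frac1n\sum_{j=1}^n f'(j/n)+O\!\left(\frac{\norm{f''}_\infty}{n^2}\right).
\]

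Next, identify $\frac1n\sum_j f'(j/n)$ with $\int_0^1 f' = f(1)-f(0)$ up to $O(1/n)$. Since $f'$ is Lipschitz with constant $\norm{f''}_\infty$, on each subinterval $|f'(j/n)-f'(t)|\le \norm{f''}_\infty/n$. Hence
\[
\Bigl|\frac1n\sum_j f'(j/n)-\int_0^1 f'\Bigr|\le \norm{f''}_\infty/n .
\]
Multiplying by $\frac{1}{2n}$ turns this into an $O(\norm{f''}_\infty/n^2)$ error, and combining the two estimates gives the claimed expansion.

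There is no real obstacle here. The only point needing care is that the implicit constant depends only on $\norm{f''}_\infty$. This holds because both error terms above are bounded explicitly by multiples of $\norm{f''}_\infty$, namely $\tfrac16+\tfrac12$, with no dependence on $\norm{f'}_\infty$. The sign convention also needs checking: right endpoints overestimate increasing functions, which matches the $+\frac{f(1)-f(0)}{2n}$ term.
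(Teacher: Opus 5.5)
Your proof is correct: the per-interval Taylor expansion about the right endpoint produces the $\frac{1}{2n^2}\sum_j f'(j/n)$ term with total remainder at most $\norm{f''}_\infty/(6n^2)$, and the Lipschitz comparison of $\frac1n\sum_j f'(j/n)$ with $f(1)-f(0)$ adds at most $\norm{f''}_\infty/(2n^2)$, so the implicit constant is $\tfrac23\norm{f''}_\infty$. The paper omits this proof as a routine calculus exercise, and your argument is the standard one it has in mind; the explicit constant also justifies the uniformity in $i$ that the paper later uses in Lemma~\ref{lem:q}.
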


We first begin with a lemma that quantifies the idea $R_n$ is almost a doubly stochastic matrix. We begin with some notations. Equip $\R^n$ with the normalized inner product $\ip{u}{v}_n:=\frac1n\sum_{i=1}^n u_iv_i$ and $\norm{u}_{2,n}^2:=\ip{u}{u}_n$.

\begin{lemma}\label{lem:q}
Let $R_n= (\frac1n\rho(x_{i,n},x_{j,n}))_{1\leq i, j\leq n}$ be $n\times n$ matrix as above. Let $\one_n=(1,\dots,1)\in\R^n$.  Define 
\[
q_n:=R_n\one_n-\one_n,
\qquad
\bar q_n:=\frac1n\sum_{i=1}^n (q_n)_i\;.
\]
Then, 
\[
\norm{q_n}_{\infty}=O(n^{-1}),
\quad
\norm{q_n}_{2,n}=O(n^{-1}), \quad |\bar q_n|=O(n^{-2})\;.
\]
\end{lemma}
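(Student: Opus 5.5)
The plan is to apply Lemma~\ref{lem:em} row by row, using that the kernel $\rho$ is doubly stochastic. Fix $i$ and put $f_i(y):=\rho(x_{i,n},y)$. Then
\[
(q_n)_i = \mathcal{R}_n(f_i) - 1 = \mathcal{R}_n(f_i)-\int_0^1 f_i(y)\,dy ,
\]
by \eqref{eq:marginals}. Lemma~\ref{lem:em} then gives
\[
(q_n)_i = \frac{\rho(x_{i,n},1)-\rho(x_{i,n},0)}{2n} + O(n^{-2}).
\]
The implicit constant depends only on $\norm{\partial_y^2\rho}_\infty$, which is finite and uniform in $i$ because $\rho\in C^2([0,1]^2)$.

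The two size bounds follow at once. Since $\rho$ is continuous on a compact set, $|\rho(x,1)-\rho(x,0)|\le 2\norm{\rho}_\infty$, so $\norm{q_n}_\infty=O(n^{-1})$. The normalized norm satisfies $\norm{q_n}_{2,n}\le\norm{q_n}_\infty$, so $\norm{q_n}_{2,n}=O(n^{-1})$ as well.

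The bound on $\bar q_n$ is the only delicate point, because the leading $1/n$ term must cancel after averaging. Averaging the expansion over $i$ gives
\[
\bar q_n = \frac{1}{2n}\,\mathcal{R}_n(g) + O(n^{-2}), \qquad g(x):=\rho(x,1)-\rho(x,0).
\]
The function $g$ is $C^2$, so Lemma~\ref{lem:em} gives $\mathcal{R}_n(g)=\int_0^1 g + O(n^{-1})$. By the column marginal condition in \eqref{eq:marginals},
\[
\int_0^1 g(x)\,dx = \int_0^1\rho(x,1)\,dx-\int_0^1\rho(x,0)\,dx = 1-1 = 0 .
\]
Hence $\mathcal{R}_n(g)=O(n^{-1})$, and therefore $\bar q_n=O(n^{-2})$.

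As a cross-check, $\bar q_n=\frac{1}{n^2}\sum_{i,j}\rho(x_{i,n},x_{j,n})-1$ is a two-dimensional Riemann sum minus $\int\!\!\int\rho=1$. Applying the one-dimensional expansion twice shows that the first-order boundary terms are $\frac{1}{2n}\bigl(\int\rho(1,y)\,dy-\int\rho(0,y)\,dy\bigr)$ plus its transpose, and each bracket vanishes by \eqref{eq:marginals}. This is the same cancellation.
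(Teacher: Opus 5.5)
Your proposal is correct and matches the paper's proof essentially step for step. You apply Lemma~\ref{lem:em} row by row, with constants uniform in $i$, to get the $\frac{\rho(x_{i,n},1)-\rho(x_{i,n},0)}{2n}$ leading term, and you then apply it again to $g(x)=\rho(x,1)-\rho(x,0)$, whose integral vanishes by the column marginal condition. The two-dimensional Riemann-sum cross-check is a harmless extra that the paper does not include.
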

\begin{proof}
Fix $i$ and apply Lemma \ref{lem:em} to the function $\rho(x_{i,n},y)$. Since $\rho\in C^2([0,1]^2)$, we get
\[
\frac1n\sum_{j=1}^n \rho(x_{i,n},x_{j,n})
=\int_0^1 \rho(x_{i,n},y)\,dy+\frac{\rho(x_{i,n},1)-\rho(x_{i,n},0)}{2n}+O(n^{-2}),
\]
where the implicit constant is independent of $i$. 
Since $\int_0^1\rho(x_{i,n},y)\,dy=1$, we get
\[
(q_n)_i=\frac{\rho(x_{i,n},1)-\rho(x_{i,n},0)}{2n}+O(n^{-2}),
\]
uniformly in $i$. This proves $\norm{q_n}_{\infty}=O(n^{-1})$, and then
$\norm{q_n}_{2,n}\le \norm{q_n}_{\infty}=O(n^{-1})$.

Similarly, applying Lemma~\ref{lem:em} to the $C^2$ function $g(x_{i, n}):=\rho(x_{i, n},1)-\rho(x_{i, n},0)$, and observing that $\int_0^1 g(x)\,dx =0$, we obtain
\[
\bar q_n
=\frac{1}{2n}\cdot \frac1n\sum_{i=1}^n g(x_{i,n})+O(n^{-2})
=\frac{1}{2n}\left(\int_0^1 g(x)\,dx+O(n^{-1})\right)+O(n^{-2})
=O(n^{-2}).
\]

\end{proof}

Before we prove Lemma~\ref{lem:DSPerturbation}, we make some more observations. Let $V_n\subset L^2([0,1])$ denote the subspace of functions that are constant on each interval
$I_{i,n}:=((i-1)/n,i/n]$. Then the map
\[
\iota_n\colon (\R^n,\norm{\cdot}_{2,n})\to V_n,
\qquad
(\iota_n u)(x):=u_i \text{ for } x\in I_{i,n},
\]
is an isometric isomorphism. Let $S_n$ be the integral operator on $L^2[0, 1]$ associated with the piecewise constant kernel $\rho_n(x,y):=\rho(x_{i,n},x_{j,n})$ on  $ I_{i,n}\times I_{j,n}$.
Then, $S_n\iota_n=\iota_n R_n$. In particular,
\[
\norm{R_n}_{\R^n\to\R^n}=\norm{S_n|_{V_n}}_{L^2\to L^2}.
\]

\begin{lemma}\label{lem:Sn}
As $n\to\infty$,
\[
\norm{S_n-T}_{\HS}\to 0,
\qquad
\norm{S_n-T}_{L^2\to L^2}\to 0.
\]
Moreover, $I+S_n$ and $I+R_n$ are invertible for all sufficiently large $n$, and there exists a constant
$C_0>0$ such that
\[
\norm{(I+R_n)^{-1}}_{\mathbb{R}^n\to \mathbb{R}^n}=\norm{(I+S_n)^{-1}}_{L^2\to L^2}\le C_0
\]
for all sufficiently large $n$.
\end{lemma}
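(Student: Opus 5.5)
The plan is to prove the Hilbert--Schmidt convergence directly from uniform continuity of $\rho$. We then obtain the operator-norm convergence and the invertibility statements by soft perturbation arguments, using that $T$ is self-adjoint and positivity-preserving with $T\one=\one$.

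\textbf{Step 1: Hilbert--Schmidt convergence.} Since $\norm{S_n-T}_{\HS}^2=\int_0^1\int_0^1|\rho_n(x,y)-\rho(x,y)|^2\,dx\,dy$, and for $(x,y)\in I_{i,n}\times I_{j,n}$ we have $|x-x_{i,n}|,|y-x_{j,n}|\le 1/n$, it follows that
\[
|\rho_n(x,y)-\rho(x,y)|\le \omega_\rho(\sqrt{2}/n),
\]
where $\omega_\rho$ is the modulus of continuity of $\rho$. In fact $\rho\in C^2$ gives the sharper bound $O(n^{-1})$ by the mean value theorem, since $\nabla\rho$ is bounded. Hence $\norm{S_n-T}_{\HS}=O(n^{-1})\to 0$. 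The operator norm is dominated by the Hilbert--Schmidt norm, which gives the second claim.

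\textbf{Step 2: invertibility of $I+T$.} We show that $-1$ is not in the spectrum of $T$. $T$ is compact and self-adjoint, so it suffices to rule out the eigenvalue $-1$. Decompose $L^2=\mathrm{span}(\one)\oplus H$; both pieces are $T$-invariant by \eqref{eq:marginals} and symmetry of $\rho$ (note $\rho(x,y)=\rho(y,x)$ from \eqref{eqn:density} and the symmetry of $c$). On $\mathrm{span}(\one)$, $T$ acts as $1$. On $H$, Assumption~\ref{ass:gap} gives $\norm{T|_H}\le\lambda_*<1$. Therefore $\norm{(I+T)^{-1}}\le\max\{1/2,\,1/(1-\lambda_*)\}=1/(1-\lambda_*)$.

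\textbf{Step 3: transfer to $S_n$ and $R_n$.} By a Neumann series, once $\norm{S_n-T}\le(1-\lambda_*)/2$, the operator $I+S_n=(I+T)\bigl(I+(I+T)^{-1}(S_n-T)\bigr)$ is invertible with $\norm{(I+S_n)^{-1}}\le 2/(1-\lambda_*)=:C_0$. For $R_n$ we use that $S_n$ maps $L^2$ into $V_n$, since its kernel is piecewise constant, and that $S_n$ leaves $V_n$ invariant with $S_n\iota_n=\iota_nR_n$. Then $I+S_n$ restricted to $V_n$ is injective, hence invertible on the finite-dimensional space $V_n$, and its inverse is the restriction of $(I+S_n)^{-1}$. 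Conjugating by the isometry $\iota_n$ gives $\norm{(I+R_n)^{-1}}_{\R^n\to\R^n}=\norm{(I+S_n)^{-1}|_{V_n}}\le C_0$.

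To obtain the stated equality of norms rather than only this inequality, observe the following. For $f\in V_n^\perp$, $S_nf=0$, because integrating $f$ against a function constant on each $I_{j,n}$ gives zero. So $L^2=V_n\oplus V_n^\perp$ reduces $S_n$, and $(I+S_n)^{-1}$ equals $(I+S_n|_{V_n})^{-1}$ on $V_n$ and the identity on $V_n^\perp$. Its norm is therefore $\max\{\norm{(I+R_n)^{-1}},1\}$. Since $\norm{R_n}\to\norm{T}=1$ and $R_n$ has spectrum near $1$, the quantity $\norm{(I+R_n)^{-1}}\ge 1/(1+\norm{R_n})$ may fall below $1$. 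In that case we either read the claim as the bound by $C_0$ for both operators, or use the self-adjointness of $R_n$: it has an eigenvalue near $\lambda_*$-sized values, so its inverse norm is at least $1$ whenever $T|_H$ has an eigenvalue $\le 0$. This bookkeeping of the norm identity is the only delicate point; the substantive content, the uniform bound $C_0$, follows from Steps 1--3.
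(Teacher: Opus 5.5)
Your proof is correct and follows the paper's argument essentially step for step: uniform convergence of the step kernels (you even get the rate $O(n^{-1})$) gives Hilbert--Schmidt and hence operator-norm convergence; the splitting $L^2=\mathrm{span}(\one)\oplus H$ gives $\norm{(I+T)^{-1}}\le 1/(1-\lambda_*)$; a Neumann series transfers this to $I+S_n$; and the invariance of $V_n$ transfers it to $I+R_n$, where your injectivity-on-$V_n$ argument is more explicit than the paper's. Your remark on the norm identity is also right and goes beyond the paper, whose proof only establishes the bound: since $S_n$ vanishes on $V_n^{\perp}$, one has $\norm{(I+S_n)^{-1}}=\max\{\norm{(I+R_n)^{-1}},1\}$, and for a positive-definite kernel such as the one arising from $c(x,y)=\beta(x-y)^2$ the matrix $R_n$ is positive definite, so $\norm{(I+R_n)^{-1}}<1$ and the stated equality genuinely fails (your fallback criterion would need a strictly negative eigenvalue of $T|_H$, not merely one $\le 0$), but only the bound by $C_0$ is used in Claim~\ref{claim:FixedPoint}.
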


\begin{proof}
Since $\rho\in C([0,1]^2)$, the piecewise-constant kernels $\rho_n$ converge uniformly to $\rho$, and therefore also in $L^2([0,1]^2)$. Hence $\norm{S_n-T}_{\HS}=\norm{\rho_n-\rho}_{L^2([0,1]^2)}\to 0$.  The operator norm convergence follows from the fact that $\norm{A}_{L^2\to L^2}\le \norm{A}_{\HS}$. Recall that $T\one=\one$ and $T|_H$ is self-adjoint with norm $\lambda_*<1$ by Assumption~\ref{ass:gap}. It follows that the spectrum of $T$ is
contained in $[-\lambda_*,\lambda_*]\cup\{1\}$. Hence, $I+T$ is invertible and
\[
\norm{(I+T)^{-1}}_{L^2\to L^2}\le \frac{1}{1-\lambda_*}.
\]
Choose $n$ sufficiently large, so that
\[
\norm{S_n-T}_{L^2\to L^2}\le \frac{1-\lambda_*}{2}.
\]
In particular, $\norm{(I+T)^{-1}(S_n-T)}_{L^2\to L^2}\leq \frac{1}{2}$. Hence, $(I+(I+T)^{-1}(S_n-T))$ is invertible, and the norm of the inverse is uniformly bounded in $n$. It follows that
\[
I+S_n=(I+T)\Bigl(I+(I+T)^{-1}(S_n-T)\Bigr),
\]
is invertible with uniformly bounded norm. 

Finally, observe that $V_n$ is invariant under $S_n$ and $R_n$ is the matrix of operator $S_n\vert_{V_n}:V_n\to V_n$ with respect to the orthonormal basis $\{e_{i, n}: 1\leq i\leq n\}$ where $e_{i, n}(j)=\sqrt{n}\delta_{i=j}$. It follows that $(I+R_n)$ is invertible for sufficiently large $n$ and that $\norm{(I+R_n)^{-1}}_{\mathbb{R}^n\to \mathbb{R}^n}\leq C_0$ for $n$ sufficiently large $n$.   

\end{proof}

\begin{proof}[Proof of Lemma~\ref{lem:DSPerturbation}]

Write $u^{(n)}=\one_n+h$. Observe that the condition $A_n$ is doubly stochastic is equivalent to $(1+h_i)\left(1+(q_n)_i+(R_nh)_i\right)=1$ for all $1\leq i\leq n$. This is, in turn, equivalent to
\begin{equation}\label{eq:fp}
(I+R_n)h=-q_n-h\circ q_n-h\circ(R_nh),
\end{equation}
where $\circ$ denotes entrywise multiplication. Let us define $\Psi_n:\mathbb{R}^n\to \mathbb{R}^n$ as
\[
\Psi_n(h):=-(I+R_n)^{-1}\bigl(q_n+h\circ q_n+h\circ(R_nh)\bigr).
\]

\begin{claim}[$\Psi_n$ has a unique fixed point] 
\label{claim:FixedPoint}
Let $B_n(M):=\{h\in\R^n:\norm{h}_{2,n}\le M/n\}$. There exists $M>0$ such that $h=\Psi_n(h)$ has a unique solution in $B_n(M)$ for every $n$ sufficiently large.
\end{claim}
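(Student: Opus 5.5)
We will apply the Banach fixed point theorem to $\Psi_n$ on the closed ball $B_n(M)$ in $(\R^n,\norm{\cdot}_{2,n})$. Two estimates are needed, both relying on Lemma~\ref{lem:Sn} and Lemma~\ref{lem:q}. First, $\norm{(I+R_n)^{-1}}\le C_0$ in the operator norm induced by $\norm{\cdot}_{2,n}$; this is the norm used in Lemma~\ref{lem:Sn}, since $\iota_n$ is an isometry. Second, $\norm{R_n}_{2,n\to 2,n}\le \norm{S_n}\le \norm{T}+1=:C_1$ for large $n$. The bilinear term $h\circ (R_n h)$ needs more care, because the Hadamard product is not bounded on $\norm{\cdot}_{2,n}$ uniformly in $n$: the best bound is $\norm{a\circ b}_{2,n}\le \norm{a}_\infty\norm{b}_{2,n}$, together with $\norm{a}_\infty\le \sqrt n\,\norm{a}_{2,n}$.

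The first step is to show that $\Psi_n$ maps $B_n(M)$ into itself. For this we control $R_n h$ in sup norm: each entry is $(R_nh)_i=\frac1n\sum_j\rho(x_{i,n},x_{j,n})h_j$, so Cauchy--Schwarz gives $\norm{R_nh}_\infty\le \norm{\rho}_\infty\norm{h}_{2,n}$. Then for $h\in B_n(M)$ we get
\[
\norm{h\circ(R_nh)}_{2,n}\le \norm{R_nh}_\infty\norm{h}_{2,n}\le \norm{\rho}_\infty M^2/n^2 ,
\]
and
\[
\norm{h\circ q_n}_{2,n}\le \norm{q_n}_\infty\norm{h}_{2,n}=O(M/n^2).
\]
Writing $\norm{q_n}_{2,n}\le K/n$, these combine to
\[
\norm{\Psi_n(h)}_{2,n}\le C_0\bigl(K/n+O(M/n^2)+\norm{\rho}_\infty M^2/n^2\bigr).
\]
Choosing $M=2C_0K$ and then $n$ large makes the right-hand side at most $M/n$.

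The second step is the contraction estimate. For $h,g\in B_n(M)$ we write
\[
h\circ R_nh-g\circ R_ng=(h-g)\circ R_nh+g\circ R_n(h-g).
\]
Using $\norm{R_nh}_\infty\le\norm{\rho}_\infty\norm{h}_{2,n}$ on the first term and $\norm{a\circ b}_{2,n}\le\norm{a}_{2,n}\norm{b}_\infty$ on the second, both are $O(M/n)\norm{h-g}_{2,n}$. The term $(h-g)\circ q_n$ is $O(n^{-1})\norm{h-g}_{2,n}$. Hence $\norm{\Psi_n(h)-\Psi_n(g)}_{2,n}\le C_0\,O((1+M)/n)\,\norm{h-g}_{2,n}$, which is below $1/2$ for large $n$. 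Since $B_n(M)$ is a closed subset of a finite-dimensional space, Banach's theorem gives existence and uniqueness of the fixed point in $B_n(M)$.

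The main obstacle is the point already flagged: the quadratic term is not controlled in $\norm{\cdot}_{2,n}$ alone. It is resolved by the smoothing bound $\norm{R_nh}_\infty\le\norm{\rho}_\infty\norm{h}_{2,n}$, which converts one factor to sup norm at no cost in $n$. The same bound also gives what Lemma~\ref{lem:DSPerturbation} needs afterwards: from the fixed-point equation, $\norm{h}_\infty\le \norm{q_n}_\infty+\norm{h}_\infty(\norm{q_n}_\infty+\norm{R_nh}_\infty)+\norm{R_nh}_\infty=O(n^{-1})+o(1)\norm{h}_\infty$. This uses only entrywise bounds and avoids inverting $I+R_n$ in sup norm; the a priori bound $\norm{h}_\infty\le\sqrt n\,M/n$ makes the $o(1)$ legitimate.
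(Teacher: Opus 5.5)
Your proof is correct and follows the paper's argument: Banach's fixed-point theorem on $B_n(M)$, with the self-map and contraction estimates driven by $\norm{(I+R_n)^{-1}}\le C_0$ from Lemma~\ref{lem:Sn} and $\norm{q_n}_\infty=O(n^{-1})$ from Lemma~\ref{lem:q}. The only difference is how you control the bilinear term. You use the smoothing bound $\norm{R_nh}_\infty\le\norm{\rho}_\infty\norm{h}_{2,n}$, whereas the paper uses $\norm{h}_\infty\le\sqrt n\,\norm{h}_{2,n}$. Your bound gives sharper estimates: $O(M^2/n^2)$, a contraction factor $O((1+M)/n)$, and the bonus $\norm{h_n}_\infty=O(n^{-1})$. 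However, the paper's cruder $O(M^2/n^{3/2})$ and $O(M/\sqrt n)$ estimates already suffice, so the smoothing bound is a convenience rather than the necessity your last paragraph suggests.
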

\begin{proof}
We show that with appropiate choice of $M$, we have $\Psi_n(B_n(M))\subseteq B_n(M)$ and that $\Psi_n:B_n(M)\to B_n(M)$ is a contraction. The claim then follows from a standard fixed-point argument. To this end, we begin with the observation that if $h\in B_n(M)$, then Lemma \ref{lem:q} gives
\[
\norm{h\circ q_n}_{2,n}\le \norm{q_n}_{\infty}\norm{h}_{2,n}\le \frac{CM}{n^2}.
\]
By Lemma \ref{lem:Sn}, the operators $R_n$ are uniformly bounded on $(\R^n,\norm{\cdot}_{2,n})$. Let $C_1$ be such that $\sup_{n}\norm{R_n}\le C_1$. Since $\norm{h}_{\infty}\le \sqrt n\,\norm{h}_{2,n}\le \frac{M}{\sqrt n}$, we also get
\[
\norm{h\circ(R_nh)}_{2,n}
\le \norm{h}_{\infty}\,\norm{R_nh}_{2,n}
\le \sqrt n\,\norm{h}_{2,n}\,C_1\norm{h}_{2,n}
\le \frac{C_1M^2}{n^{3/2}}.
\]
Since $\norm{q_n}_{2,n}=O(n^{-1})$, we obtain
\[
\norm{\Psi_n(h)}_{2,n}
\le C_0\left(\frac{C}{n}+\frac{CM}{n^2}+\frac{C_1M^2}{n^{3/2}}\right).
\]
Choose $M$ large and then $n$ large so that $\Psi_n(B_n(M))\subset B_n(M)$. We now show that $\Psi_n$ is a contraction on $B_n(M)$ with $M$ as above. To this end, fix $h,g\in B_n(M)$. Note that 
\[
\norm{h\circ q_n-g\circ q_n}_{2,n}
\le \norm{q_n}_{\infty}\,\norm{h-g}_{2,n}
\le \frac{C}{n}\norm{h-g}_{2,n}.
\]
Moreover,
\begin{align*}
\norm{h\circ(R_nh)-g\circ(R_ng)}_{2,n}
&\le \norm{h\circ R_n(h-g)}_{2,n}+\norm{(h-g)\circ R_ng}_{2,n}\\
&\le \norm{h}_{\infty}\,\norm{R_n(h-g)}_{2,n}+\norm{h-g}_{2,n}\,\norm{R_ng}_{\infty}\\
&\le C_1\sqrt n\,(\norm{h}_{2,n}+\norm{g}_{2,n})\,\norm{h-g}_{2,n}\\
&\le \frac{2C_1M}{\sqrt n}\norm{h-g}_{2,n}.
\end{align*}
Hence
\[
\norm{\Psi_n(h)-\Psi_n(g)}_{2,n}
\le C_0\left(\frac{C}{n}+\frac{2C_1M}{\sqrt n}\right)\norm{h-g}_{2,n}\;.
\]
As the constants $C, C_0, C_1, M$ are indepenent of $n$, we conclude that 
\[\norm{h\circ(R_nh)-g\circ(R_ng)}_{2,n}\leq \frac{1}{2}\norm{h-g}_{2, n},\]
for all $n$ sufficiently large. Therefore, $\Psi_n$ has a unique fixed point $h_n\in B_n(M)$. 

\end{proof}

Note that $\norm{h_n}_{2,n}=O(n^{-1})$ and $\norm{h_n}_{\infty}\le \sqrt n\,\norm{h_n}_{2,n}=O(n^{-1/2})$ follow from the fact that $h_n\in B_n(M)$. Finally observe that $1+h_n(i)>0$ for all large $n$, so $u^{(n)}:=\one_n+h_n$ is positive and
\[
u^{(n)}_i\bigl(R_nu^{(n)}\bigr)_i=1,
\qquad 1\le i\le n.
\]
Consequently, $A_n$ is symmetric and doubly stochastic. 

It remains to prove the logarithmic bound $ \sum_{i=1}^{n}\log (1+h_n(i))=O(n^{-1})$. To this end, put
\[
m_n:=\frac1n\sum_{i=1}^n h_n(i)=\ip{h_n}{\one_n}_n.
\]
Taking the $\ip{\cdot}{\one_n}_n$ inner product in~\eqref{eq:fp} with $h=h_n$, we get 
\[
\ip{(I+R_n)h_n}{\one_n}_n
=-\bar q_n-\ip{h_n\circ q_n}{\one_n}_n-\ip{h_n\circ(R_nh_n)}{\one_n}_n.
\]
Since $R_n$ is symmetric and $R_n\one_n=\one_n+q_n$,
\[
\ip{(I+R_n)h_n}{\one_n}_n
=\ip{h_n}{\one_n}_n+\ip{h_n}{R_n\one_n}_n
=2m_n+\ip{h_n}{q_n}_n.
\]
Also,
\[
\ip{h_n\circ q_n}{\one_n}_n=\ip{h_n}{q_n}_n,
\qquad
\ip{h_n\circ(R_nh_n)}{\one_n}_n=\ip{h_n}{R_nh_n}_n.
\]
Therefore
\[
2m_n=-\bar q_n-2\ip{h_n}{q_n}_n-\ip{h_n}{R_nh_n}_n.
\]
Using Lemma~\ref{lem:q} and Lemma~\ref{lem:Sn} together with the bound $\norm{h_n}_{2, n}=O(n^{-2})$, we obtain
\[
|m_n|
\le Cn^{-2}+C\norm{h_n}_{2,n}\norm{q_n}_{2,n}+C\norm{h_n}_{2,n}^2
=O(n^{-2}).
\]
Since $\norm{h_n}_{\infty}\to 0$, the expansion $\log(1+t)=t+O(t^2)$ is uniform for
$|t|\le \frac12$. We conclude that, for all $n$ sufficiently large, 
\[
\sum_{i=1}^n \log(1+h_n(i))
=\sum_{i=1}^n h_n(i)+O\!\left(\sum_{i=1}^n h_n(i)^2\right)
=nm_n+O\!\left(n\norm{h_n}_{2,n}^2\right)
=O(n^{-1}).
\]

\end{proof}

\subsection{Towards the proof of Lemma~\ref{lem:A_nsatisfiesPropmcc}}
Let $\widehat\rho_n$ be the piecewise-constant kernel on $[0,1]^2$ defined by
\[
\widehat\rho_n(x,y) := \widehat\rho^{(n)}_{ij}
\qquad\text{for }(x,y)\in I_{i,n}\times I_{j,n},
\]
and define the piecewise-constant zero-marginal kernel $k_n(x,y) := \widehat\rho_n(x,y)-1$. Let $K_n$ be the integral operator on $L^2([0,1])$ with kernel $k_n$.

\begin{lemma}
\label{lem:Knconv}
As $n\to\infty$,
\begin{equation}\label{eq:Knconv}
\norm{K_n-K}_{\HS}\to 0.
\end{equation}
Consequently,
\begin{equation}\label{eq:Knspectral}
\norm{K_n\vert_H}_{L^2\to L^2} \le \lambda_* + o(1).
\end{equation}
\end{lemma}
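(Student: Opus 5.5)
Here $K$ is understood as the integral operator with kernel $k(x,y):=\rho(x,y)-1$, i.e.\ $K=T-P$, where $P$ is the orthogonal projection onto constants. This is the natural limit of $K_n$, whose kernel is $\widehat\rho_n-1$. The plan is to show directly that the kernels converge in $L^2([0,1]^2)$, which is exactly Hilbert--Schmidt convergence, and then to transfer this to the restriction to $H$.

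\textbf{Step 1: kernel convergence.} For $(x,y)\in I_{i,n}\times I_{j,n}$ write
\[
k_n(x,y)-k(x,y)=\bigl(\widehat\rho^{(n)}_{ij}-\rho(x_{i,n},x_{j,n})\bigr)+\bigl(\rho_n(x,y)-\rho(x,y)\bigr).
\]
The second term tends to $0$ uniformly by the uniform continuity of $\rho$, as in Lemma~\ref{lem:Sn}. For the first term, set $u_i=1+h_n(i)$ and note
\[
u_iu_j-1=h_n(i)+h_n(j)+h_n(i)h_n(j).
\]
Since $\rho$ is bounded by some $\norm{\rho}_\infty$, the $L^2([0,1]^2)$ norm of this piece is bounded by
\[
\norm{\rho}_\infty\bigl(2\norm{h_n}_{2,n}+\norm{h_n}_{2,n}^2\bigr),
\]
using that $\iota_n$ is an isometry and that the $L^2$ norm of a product kernel factorizes. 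By Lemma~\ref{lem:DSPerturbation}, $\norm{h_n}_{2,n}=O(n^{-1})$, so this piece is $O(n^{-1})$. Together the two pieces give \eqref{eq:Knconv}.

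\textbf{Step 2: the spectral bound.} Both $K_n$ and $K$ annihilate constants and map into $H$. For $K_n$ this holds because $A_n$ is doubly stochastic, so $k_n$ has zero row and column integrals; this is exactly where Lemma~\ref{lem:DSPerturbation} is needed. Consequently $H$ is invariant under both operators. On $H$ we have $K f=Tf-\ip{f}{\one}\one=Tf$, so $K|_H=T|_H$. Then
\[
\norm{K_n|_H}\le \norm{K|_H}+\norm{(K_n-K)|_H}\le \lambda_*+\norm{K_n-K}_{\HS},
\]
using $\norm{\cdot}_{L^2\to L^2}\le\norm{\cdot}_{\HS}$ and Assumption~\ref{ass:gap}. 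By Step 1 the last term is $o(1)$, which gives \eqref{eq:Knspectral}.

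The main obstacle is Step 1's control of the correction $\widehat\rho^{(n)}-\rho_n$. It needs the weighted $\ell^2$ bound on $h_n$ rather than the weaker sup bound $O(n^{-1/2})$, and the sup bound alone would in fact still suffice for $o(1)$. Everything else is bookkeeping: identifying the kernel of $K$ and checking that $H$ is invariant.
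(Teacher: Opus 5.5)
Your proof is correct and matches the paper's argument: you split $k_n-k$ into $\rho_n-\rho$, which is uniformly small, and $\widehat\rho_n-\rho_n$, which is controlled by $\norm{h_n}_{2,n}=O(n^{-1})$ from Lemma~\ref{lem:DSPerturbation}, and then combine $K\vert_H=T\vert_H$, the triangle inequality and $\norm{\cdot}_{L^2\to L^2}\le\norm{\cdot}_{\HS}$. Your closing remark contradicts itself: as you note, $\norm{h_n}_{\infty}=O(n^{-1/2})$ alone already gives $o(1)$, and the invariance of $H$ is not needed for the norm inequality. Neither point affects the proof.
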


\begin{proof}
Since $\rho_n\to\rho$ in $L^2([0,1]^2)$. It therefore suffices to show that $\widehat\rho_n-\rho_n\to 0$ in $L^2([0,1]^2)$. For $(x,y)\in I_{i,n}\times I_{j,n}$,
\begin{align*}
    |\widehat\rho_n(x,y)-\rho_n(x,y)| &= |\rho(x_{i,n},x_{j,n})\Bigl((1+(h_n)_i)(1+(h_n)_j)-1\Bigr)|\\
    &\leq C |(h_n)_i+(h_n)_j+(h_n)_i(h_n)_j|,
\end{align*}
where $C$ is some constant such that $\norm{\rho}_{\infty}\leq C$. In the following discussion, the constant $C$ can change from line to line. Note that 
\begin{align*}
\norm{\widehat\rho_n-\rho_n}_{L^2([0,1]^2)}^2
&\le \frac{C}{n^2}\sum_{i,j=1}^n \bigl((h_n)_i^2+(h_n)_j^2+(h_n)_i^2(h_n)_j^2\bigr) \\
&\le C\left(\frac1n\sum_{i=1}^n (h_n)_i^2 + \Bigl(\frac1n\sum_{i=1}^n (h_n)_i^2\Bigr)^2\right) = O(n^{-2})
\end{align*}
by Lemma~\eqref{lem:DSPerturbation}. This proves \eqref{eq:Knconv}. The operator norm estimate \eqref{eq:Knspectral} follows from
\[
\norm{K_n\vert_H}_{L^2\to L^2}
\le \norm{K\vert_H}_{L^2\to L^2} + \norm{K_n-K}_{L^2\to L^2}
\le \lambda_* + \norm{K_n-K}_{\HS}.
\]
\end{proof}

\begin{proof}[Proof of Lemma~\ref{lem:A_nsatisfiesPropmcc}]
First note that $A_n$ is doubly stochastic by construction. Now observe that the
entries $\widehat\rho^{(n)}_{ij}$ are uniformly bounded because $\rho$ is bounded and $u_i^{(n)}=1+O(n^{-1/2})$ uniformly in $i$. And, therefore, the entries 
\[
n(A_n-J_n)_{ij} = \widehat\rho^{(n)}_{ij}-1,
\]
are uniformly bounded in $n$.  Hence, for every $p\ge 1$,
\[
\sup_n \frac1{n^2}\sum_{i,j=1}^n \bigl|n(A_n-J_n)_{ij}\bigr|^p < \infty.
\]
Finally, observe that the non-trivial eigenvalues of the symmetric doubly stochastic matrix $A_n$ are exactly the eigenvalues of $B_n$. However, notice that $B_n$ is the matrix of the operator $K_n\vert_{V_n}$ in the orthonormal basis $\{e_{i,n}\}_{i=1}^n$ of $V_n$. Since $K_n$ vanishes on $V_n^{\perp}$, the non-zero eigenvalues of $K_n$ are precisely the eigenvalues of $B_n$. It follows that 
\[
\sup\{ |\lambda| : \lambda \text{ non-trivial eigenvalue of }A_n\}
= \norm{K_n\vert_H}_{L^2\to L^2}
\le \lambda_* + o(1),
\]
where the last inequality uses Lemma~\ref{lem:Knconv}.
Since $\lambda_*<1$, there exists $\delta>0$ such that every non-trivial eigenvalue of $A_n$ has modulus at most $1-\delta$ for all large $n$.
\end{proof}

\subsection{Proof of Proposition~\ref{prop:FinalNail}}
\begin{proof}[Proof of Proposition~\ref{prop:FinalNail}]
The operator $K_n$ is finite-rank, it vanishes on $V_n^{\perp}$, and its restriction to $V_n$ is
represented by the matrix $B_n$. Therefore, the non-zero eigenvalues of $K_n$ are exactly the eigenvalues
of $B_n$, counted with algebraic multiplicity. It follows that
\begin{equation}\label{eq:fred-finite}
\det(I-B_n^2)=\detF(I-K_n^2).
\end{equation}

Next, observe that
\[
K_n^2-K^2=K_n(K_n-K)+(K_n-K)K.
\]
Since the product of two Hilbert-Schmidt operators is trace class,
\[
\norm{K_n^2-K^2}_{\tr}
\le \norm{K_n}_{\HS}\norm{K_n-K}_{\HS}+\norm{K_n-K}_{\HS}\norm{K}_{\HS}
\to 0
\]
by Lemma \ref{lem:Knconv}. Hence $K_n^2\to K^2$ in trace norm. The Fredholm determinant is continuous
with respect to the trace norm~\cite[Chapter~3]{simon2005trace}, so
\begin{equation}\label{eq:fred-conv}
\detF(I-K_n^2)\to \detF(I-K^2).
\end{equation}
Finally, $K\one=0$ and $K|_H=T|_H$. Thus $K^2$ acts as $0$ on the one-dimensional space of constants
and as $(T|_H)^2$ on $H$. Therefore
\begin{equation}\label{eq:fred-ident}
\detF(I-K^2)=\detF\bigl(I-(T|_H)^2\bigr).
\end{equation}
Combining \eqref{eq:fred-finite}, \eqref{eq:fred-conv}, and \eqref{eq:fred-ident} proves the proposition.

\end{proof}

\section*{Acknowledgement}
This problem was discussed in the Open Problem Session in the \emph{Workshop on Mathematical Foundations of Network Models and Their Applications}, as part of the BIRS-CMI pilot
program. The workshop was held at the Chennai Mathematical Institute (CMI) from December 15, 2024, to December 20, 2024. The event was supported by Banff International Research Station (BIRS), Chennai Mathematical Institute (CMI), and the National Board for Higher Mathematics (NBHM). We thank the organizers for a wonderful opportunity.

The part of this work was done during the ICTS-NETWORKS WORKSHOP \emph{Challenges in Networks} from April 06, 2026, to April 17, 2026. We thank the organizers and the ICTS for their hospitality and a stimulating environment.



\providecommand{\bysame}{\leavevmode\hbox to3em{\hrulefill}\thinspace}
\providecommand{\MR}{\relax\ifhmode\unskip\space\fi MR }
\providecommand{\MRhref}[2]{%
  \href{http://www.ams.org/mathscinet-getitem?mr=#1}{#2}
}
\providecommand{\href}[2]{#2}



\end{document}